\newcounter{abci}\renewcommand{\theabci}{\alph{abci}}
\theoremstyle{plain}
\newtheorem{lem}{Lemma}
\newtheorem{thm}{Theorem}
\newtheorem{prop}{Proposition}
\newtheorem{Def}{Definition}
\newtheorem{cor}{Corollary}
\theoremstyle{definition}
\newtheorem{ex}{Example}
\newtheorem{Rem}{Remark}
\title{A new life of Pearson's skewness}
\author{Yevgeniy Kovchegov
\footnote{Department of Mathematics, Oregon State University, Corvallis, OR 97331, USA 
\newline
Email: {\it kovchegy@math.oregonstate.edu}}}
\date{}
\begin{document}

\maketitle

\begin{abstract}
In this work we show how coupling and stochastic dominance methods can be successfully applied to 
a classical problem of rigorizing Pearson's skewness. Here, we use Fr\'{e}chet means to define generalized notions of positive and negative skewness
that we call {\it truly positive} and {\it truly negative}.
Then, we apply stochastic dominance approach in establishing criteria for determining whether a continuous random variable is truly positively skewed.
Intuitively, this means that scaled right tail of the probability density function exhibits strict stochastic dominance over equivalently scaled left tail.
Finally, we use the stochastic dominance criteria and establish some basic examples of true positive skewness, 
thus demonstrating how the approach works in general.
\end{abstract}

\section{Introduction}\label{sec:Intro}
Consider a positively skewed (right-skewed) unimodal distribution with finite second moment. It is expected that the mode, the median, and the mean line up 
in an increasing order, i.e.,
$$\mathrm{mode}~<~\mathrm{median}~<~\mathrm{mean}.$$
Symmetrically, for a negatively skewed (left-skewed) unimodal distribution with finite second moment,
$$\mathrm{mean}~<~\mathrm{median}~<~\mathrm{mode}.$$
Here, when saying that a distribution is positively skewed we usually mean {\it Pearson's moment coefficient of skewness} (the standardized third central moment) is positive.
However, there are notable exceptions to the above orderings, also known as the {\it mean-median-mode inequalities}. See \cite{Abadir2005,Stoyanov2014}.
It all depends on how we measure skewness. 
Indeed, there are other measurements of skewness besides the moment coefficient such as Pearson's {\it first skewness coefficient} also known as {\it mode skewness} defined as
$${\mathrm{mean} -\mathrm{mode} \over \mathrm{standard~deviation}}$$
and Pearson's {\it second skewness coefficient} also known as {\it median skewness} defined as
$$3 \times {\mathrm{mean} -\mathrm{median} \over \mathrm{standard~deviation}}.$$
The sign of the latter two measurements of skewness is consistent with the above discussed ordering of the mode, the median, and the mean.

Thus, whether the signs of the mode skewness, the median skewness, and the moment coefficient of skewness align 
depends on how we define skewness. 
First, we need an approach that unifies the different measurements of skewness in determining the sign of skewness
(this is done in Def.~\ref{def:TPS} and \ref{def:TPSmode} below).
Second, when defining positive skewness, the objective is to characterize the distributions 
for which the left tail is ``spreading short" and the right tail is ``spreading longer". 
If looking at this problem from the stochastic dominance perspective, we want to dissect the distribution into the left and the right parts so that when 
we reflect the left part, align both at zero, and multiply each by a certain monotone function (positive constant times a power of $x$) so that each part 
is converted into a distribution over positive half-line.
If the distribution obtained from the right part exhibits stochastic dominance over the distribution obtained from the left part, then this should imply that 
the left tail is ``spreading short" and the right tail is ``spreading longer".
Hence, such distribution is positively skewed (reverse left and right for the negatively skewed distributions).
Thus it is natural to consider finding criteria for the skewness to be positive or negative that is based on stochastic dominance of one tail over the other.
Definition~\ref{def:TPS} of {\it true positive skewness} and its variation Def.~\ref{def:TPSmode} for unimodal distributions 
yield such stochastic dominance criteria which we establish in Sect.~\ref{sec:StocDom}, and apply in Sect.~\ref{sec:ExPosSkew}.

\medskip
\noindent
We will generalize positive and negative skewness by consider the following class of centroids known as Fr\'{e}chet $p$-means \cite{Barbaresco}.
\begin{Def}\label{def:means}
For $p\in [1,\infty)$ and a random variable $X$ with the finite $p$-th moment, the quantity
\begin{equation}\label{eqn:p-mean}
\nu_p={\rm argmin}_{a \in \mathbb{R}} E\big[|X-a|^p\big]
\end{equation}
is called {\bf Fr\'{e}chet $p$-mean}, or simply the {\bf $p$-mean}. 
\end{Def}

\noindent
The theoretical $p$-mean $\nu_p$ is uniquely defined for all $p\geq 1$ as $E\big[|X-a|^p\big]$ is a strictly convex function of $a$. 
Moreover, the $p$-mean $\nu_p$ is a unique solution of
\begin{equation}\label{eqn:nupDefAlt}
E\big[(X-\nu_p)_+^{p-1}\big]=E\big[(\nu_p-X)_+^{p-1}\big].
\end{equation}
Notice that \eqref{eqn:nupDefAlt} can be used as an extended definition of the $p$-mean that only requires finiteness of the $(p-1)$-st moment.
\begin{Def}\label{def:meansAlt}
For $p\in [1,\infty)$ and a continuous random variable $X$ with the finite $(p-1)$-st moment, the unique solution $\nu_p$ of 
\eqref{eqn:nupDefAlt} is the {\bf $p$-mean} of $X$. 
\end{Def}
\noindent
For the rest of the paper the {\bf $p$-mean} $\nu_p$ of $X$ is as defined in Def.~\ref{eqn:nupDefAlt}, and we let 
$$\mathcal{D}=\big\{p \geq 1 \,:\, E[|X|^{p-1}]<\infty \big\}$$ 
denote the domain of $\nu_p$. 
Next, we observe that $\nu_1$ and $\nu_2$ are respectively the median (in continuous case) and the mean of $X$. 
If the distribution of $X$ is unimodal, we let $\nu_0$ denote the mode. 
In the unimodal case, we will use the domain $\,\mathcal{D}_0=\mathcal{D} \cup \{0\}$.

\medskip
\noindent
For $p \in (0,1)$, there are examples of no uniqueness of $\nu_p$ in \eqref{eqn:p-mean}. For instance, if $X$ is a Bernoulli random variable with parameter $1/2$, there will be two values of $\nu_p$ for each $p \in (0,1)$. 
In Sect.~\ref{sec:p01}, we will observe that often Definition~\ref{def:meansAlt} can be extended to include $p\in(0,1)$.

\medskip
\noindent
Next, we observe that the $p$-means $\nu_0$, $\nu_1$, $\nu_2$, and $\nu_4$ are definitive for the notion of {\it positive skewness} 
defined via the Pearson's first and second skewness coefficients, as well as the Pearson's moment coefficient of skewness.
Indeed, Karl Pearson's first skewness coefficient (mode skewness) for a unimodal random variable is expressed as
${\nu_2 -\nu_0 \over \sigma}$,
where $\sigma$ denotes the standard deviation,
while Pearson's second skewness coefficient (median skewness) is given by 
${3(\nu_2 -\nu_1) \over \sigma}$.
Finally, the renown Pearson's moment coefficient of skewness 
$$\gamma=E\left[\left({X-\nu_2 \over \sigma}\right)^3\right]$$
is also related to the $p$-means via the following proposition.
\begin{prop}\label{prop:first}
Consider a random variable $X$ with a finite third moment.
The Pearson's moment coefficient of skewness $\gamma$ is positive if and only if $\nu_4>\nu_2$.
\end{prop}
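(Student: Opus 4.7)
The plan is to translate the defining equation for $\nu_4$ into a statement about a single signed expectation, then recognize Pearson's $\gamma$ as the value of that same expression evaluated at $\nu_2$, and finally invoke strict monotonicity to read off the equivalence.

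\medskip
\noindent\textbf{Step 1: rewrite the defining equation for $\nu_4$.} I would start with the elementary pointwise identity
\[
(x-a)_+^{3} - (a-x)_+^{3} = (x-a)^{3},
\]
verified by splitting into the cases $x \geq a$ and $x < a$. Under the finite third moment hypothesis both sides are integrable, so Definition~\ref{def:meansAlt} with $p=4$ reduces to: $\nu_4$ is the unique real number with
\[
E\bigl[(X-\nu_4)^{3}\bigr]=0.
\]

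\medskip
\noindent\textbf{Step 2: isolate the right functional.} Define $g(a) = E[(X-a)^{3}]$, which is a cubic polynomial in $a$ by expansion and by finiteness of $E[|X|^k]$ for $k \leq 3$. A direct computation (differentiation under the integral is justified because $|\partial_a (X-a)^3| = 3(X-a)^2$ is dominated by $3(|X|+|a|+1)^2$ on compacts) gives $g'(a) = -3 E[(X-a)^{2}] < 0$ unless $X$ is a.s.\ constant. Since constant $X$ makes $\gamma$ undefined and $\nu_2=\nu_4$, I may assume $\sigma>0$, in which case $g$ is strictly decreasing on $\R$.

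\medskip
\noindent\textbf{Step 3: evaluate $g$ at $\nu_2$ and conclude.} By the very definition of Pearson's moment coefficient of skewness,
\[
g(\nu_2) = E\bigl[(X-\nu_2)^{3}\bigr] = \sigma^{3}\,\gamma,
\]
so $\gamma$ and $g(\nu_2)$ share their sign. Combining with $g(\nu_4) = 0$ from Step 1 and the strict monotonicity from Step 2,
\[
\gamma > 0 \;\Longleftrightarrow\; g(\nu_2) > 0 = g(\nu_4) \;\Longleftrightarrow\; \nu_2 < \nu_4,
\]
which is the claim.

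\medskip
\noindent There is no genuine obstacle here; the only points requiring care are the pointwise identity in Step~1 and the integrability bookkeeping that turns Definition~\ref{def:meansAlt} into the vanishing of the third central moment about $\nu_4$. Once that translation is in hand the result is an immediate monotonicity comparison of the cubic $g$ at its two natural arguments $\nu_2$ and $\nu_4$.
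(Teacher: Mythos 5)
Your proof is correct, and it overlaps with the paper's at the decisive first step: both reduce Definition~\ref{def:meansAlt} with $p=4$ (via the pointwise identity $(x-a)_+^3-(a-x)_+^3=(x-a)^3$) to the statement that $\nu_4$ is the unique real root of $E[(X-a)^3]=0$. From there the routes differ. The paper rewrites this cubic in the centered variable $a-\nu_2$ as $(a-\nu_2)^3+3\sigma^2(a-\nu_2)-\sigma^3\gamma=0$ and solves for $\gamma$, obtaining the closed-form relation $\gamma=t^3+3t$ with $t=(\nu_4-\nu_2)/\sigma$, from which the sign equivalence follows because $t\mapsto t^3+3t$ is strictly increasing and vanishes at $0$. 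You instead never expand the cubic: you work with the functional $g(a)=E[(X-a)^3]$ directly, showing $g'(a)=-3E[(X-a)^2]\leq -3\sigma^2<0$, and conclude from $g(\nu_4)=0$ and $g(\nu_2)=\sigma^3\gamma$. These are the same cubic monotonicity fact in two guises (indeed $g(a)=-(a-\nu_2)^3-3\sigma^2(a-\nu_2)+\sigma^3\gamma$), so neither argument is deeper than the other; but the paper's algebraic route buys the quantitative identity \eqref{eqn:cubic3} relating $\gamma$ to $\nu_4-\nu_2$, which it reuses later (the consistency check for the log-normal parameters at the end of Sect.~\ref{sec:LogNormal} and the discussion of skewness magnitude), whereas your monotonicity route yields only the sign equivalence but is somewhat tidier analytically --- you justify differentiation under the expectation and explicitly dispose of the degenerate $\sigma=0$ case, both of which the paper leaves implicit. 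No gaps; the proof stands as written.
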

\begin{proof}
Notice that the following cubic equation in $a$, 
\begin{equation}\label{eqn:cubic1}
E\big[(X-a)^3\big]=0,
\end{equation}
can be rewritten as
\begin{equation}\label{eqn:cubic2}
(a-\nu_2)^3+3\sigma^2(a-\nu_2)-\sigma^3 \gamma=0.
\end{equation}
The 4-mean $\nu_4$ is the only real root of \eqref{eqn:cubic1}, and thus of \eqref{eqn:cubic2}. Therefore, $\nu_4$  is now being obtained from \eqref{eqn:cubic2}
as follows:
\begin{equation}\label{eqn:cubic3}
\gamma=\left({\nu_4 -\nu_2 \over \sigma}\right)^3+3\left({\nu_4 -\nu_2 \over \sigma}\right).
\end{equation}
\end{proof}


\bigskip
\noindent
Throughout the rest of the paper, we consider a continuous random variable $X$ with density function $f(x)$.
Furthermore, we suppose $f(x)$ has support $supp(f)=(L,R)$, where $L$ may take value at  $-\infty$ and $R$ may take value at $\infty$.

\medskip
\noindent
By Proposition \ref{prop:first},  the second skewness coefficient and the moment coefficient of skewness are both positive if and only if
$$\nu_1 ~<~\nu_2 ~<~\nu_4.$$
In the case of unimodal continuous distribution, both, the moment coefficient of skewness is positive and the mean-median-mode inequality holds if and only if
$$\nu_0 ~<~\nu_1 ~<~\nu_2 ~<~\nu_4.$$ 
See Fig.~\ref{fig:LogNorm}. This observation suggests the notion of {\it true positive skewness} defined below.
\begin{Def}\label{def:TPS}
We say that a random variable $X$, or its distribution, is {\it truly positively skewed} if and only if $\nu_p$ is an increasing function of $p$ in the domain $\mathcal{D}$, provided the interior of $\mathcal{D}$ is nonempty.
Analogously, $X$ is {\it truly negatively skewed} if and only if $\nu_p$ is a decreasing function of $p$ in $\mathcal{D}$.
\end{Def}
\noindent
The above defined {\it true positive skewness} insures $\,\nu_1 \,<\,\nu_2 \,<\,\nu_4$.
\begin{Def}\label{def:TPSmode}
We say that a unimodal distribution is {\it truly mode positively skewed} if and only if $\nu_p$ is an increasing function of $p$ in the domain $\mathcal{D}_0$.
Analogously, it is {\it truly mode negatively skewed} if and only if $\nu_p$ is a decreasing function of $p$ in $\mathcal{D}_0$.
\end{Def}
\noindent
Observe that {\it true mode positive skewness} guarantees $\,\nu_0 \,<\,\nu_1 \,<\,\nu_2 \,<\,\nu_4$.
Notice also that a distribution can be {\it truly positively skewed} or {\it truly mode positively skewed} even in the absence of finite second moment.

\bigskip
Besides obtaining the sign of skewness, Definitions \ref{def:TPS} and \ref{def:TPSmode} do not immediately provide 
a way of measuring the magnitude of skewness.
Defining the corresponding measures of skewness and extending the approach to multidimensional distributions is touched upon in the discussion section (Sect.~\ref{sec:dis}). 
Importantly, this theoretical work concentrates on the problem of rigorously defining the sign of skewness. 
We do not intend to venture into statistical analysis or statistical applications of here defined concepts.  
In general, the question of centroids and their role in rigorously defining skewness that we considered in this paper has a long and interesting history; see for example \cite{Gauss1823,Pearson1895,Zwet64,Zwet79,McG86,DJd83,DJd88,AG95,AT98}. Yet, this problem can still generate new challenges for theoretical probabilists and statisticians alike.

In this paper we will show how true positive skewness (and analogously, true negative skewness) can be validated 
using the methods of  {\it coupling} and {\it stochastic dominance}. 
These methods are widely used in statistical mechanics and interacting particle systems, the theory of mixing times, and beyond.
See \cite{Hollander,KovOttoBook,Liggett,Lindvall,MullerStoyan2002} and references therein.
Specifically we will need the following well known result.
\begin{lem}[\cite{Lindvall,MullerStoyan2002}]\label{lem:StD}
Suppose $X$ and $Y$ are real  valued  random  variables  with  cumulative distribution functions denoted by $F_X$ and $F_Y$ respectively  and satisfying 
$F_X(x) \geq F_Y(x)$, i.e., $Y$ exhibits stochastic dominance over $X$, then, for any increasing function $h: \mathbb{R} \rightarrow \mathbb{R}$ we have
$E[h(Y)]\geq E[h(X)]$.
Moreover, if $F_X \not\equiv F_Y$, i.e., $Y$ exhibits strict stochastic dominance over $X$, and if $h(x)$ is strictly increasing, then $E[h(Y)]> E[h(X)]$.
\end{lem}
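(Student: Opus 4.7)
The plan is to prove both statements via the standard quantile (or monotone) coupling, which embeds $X$ and $Y$ on the same probability space in a way that makes the pointwise inequality $X \leq Y$ almost sure, after which monotonicity of $h$ does the rest. Concretely, I would let $U$ be uniform on $(0,1)$ and define the generalized inverses $F_X^{-1}(u)=\inf\{x\in\mathbb{R} : F_X(x)\geq u\}$ and $F_Y^{-1}(u)=\inf\{x\in\mathbb{R} : F_Y(x)\geq u\}$. Setting $\tilde X=F_X^{-1}(U)$ and $\tilde Y=F_Y^{-1}(U)$ gives random variables with the correct marginal distributions, so it suffices to compare $E[h(\tilde Y)]$ and $E[h(\tilde X)]$.

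For the first claim, I would verify the key monotonicity fact: the hypothesis $F_X(x)\geq F_Y(x)$ for all $x$ implies $F_X^{-1}(u)\leq F_Y^{-1}(u)$ for every $u\in(0,1)$. This is an easy consequence of the definition of the generalized inverse: if $F_X^{-1}(u)=x_0$, then any $x'>x_0$ satisfies $F_Y(x')\leq F_X(x')$, and pushing this through the infimum yields $F_Y^{-1}(u)\geq x_0$. Hence $\tilde X\leq \tilde Y$ almost surely, and because $h$ is increasing, $h(\tilde X)\leq h(\tilde Y)$ almost surely, so taking expectations (which are well defined as extended reals in $[-\infty,\infty]$, or finite if we assume integrability) gives $E[h(X)]=E[h(\tilde X)]\leq E[h(\tilde Y)]=E[h(Y)]$.

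For the strict-inequality part, the work is to promote the pointwise non-strict inequality into strict inequality on a set of positive probability. The main obstacle is to extract, from $F_X\not\equiv F_Y$ together with $F_X\geq F_Y$, an explicit event on which $\tilde X<\tilde Y$ with positive probability under $U$. I would proceed by choosing $x_0$ with $F_X(x_0)>F_Y(x_0)$; by right-continuity, there is a whole interval $[x_0,x_0+\delta)$ on which the strict inequality persists, and one can then pick rational numbers $p<q$ with $F_Y(x_0)<p<q<F_X(x_0)$. For $u\in(p,q)$ one checks directly from the definition of the generalized inverses that $\tilde Y(u)\geq x_0+\delta'>x_0\geq \tilde X(u)$ for some $\delta'\geq 0$ (in the worst case, $\tilde Y(u)>\tilde X(u)$ follows from the strict gap in CDF values), and this interval has positive Lebesgue measure. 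On this event strict monotonicity of $h$ gives $h(\tilde X)<h(\tilde Y)$, and everywhere else $h(\tilde X)\leq h(\tilde Y)$, so integrating yields the strict inequality $E[h(X)]<E[h(Y)]$. The only delicate point is bookkeeping with the generalized inverse at jumps of the CDFs, which is routine but warrants care.
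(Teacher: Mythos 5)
Your proposal is correct, and it is exactly the argument the paper has in mind: the paper gives no proof of its own, citing \cite{Lindvall,MullerStoyan2002} and remarking only that the lemma ``is usually proved via a coupling argument,'' and your quantile coupling $\tilde X=F_X^{-1}(U)$, $\tilde Y=F_Y^{-1}(U)$ is the standard instantiation of that coupling, including the correct extraction of a positive-measure interval $(p,q)\subset\bigl(F_Y(x_0),F_X(x_0)\bigr)$ of $u$-values on which $\tilde X(u)\le x_0<\tilde Y(u)$ for the strict case. The only caveat worth recording is the implicit one already present in the lemma itself: the strict conclusion $E[h(Y)]>E[h(X)]$ requires the expectations to be finite (or at least not both infinite of the same sign), which your extended-real bookkeeping for the non-strict part does not automatically deliver.
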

\noindent
Lemma \ref{lem:StD} is usually proved via a coupling argument. If $X$ and $Y$ are continuous random variables, the inequality in Lemma \ref{lem:StD} follows from the integration by parts. 
In some instances, instead of stating that one random variable exhibits (strict) stochastic dominance over another, it is more convenient to say that
one distribution or p.d.f. exhibits (strict) stochastic dominance over the other.

\begin{figure}[t] 
\centering\includegraphics[width=0.98\textwidth]{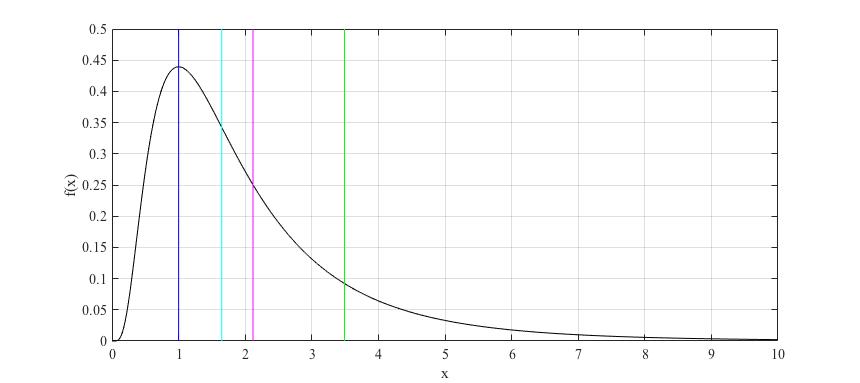}
\caption[Log-normal density function]
{For the log-normal density function with parameters $\mu=\sigma^2=1/2$ and p.d.f. $\,f(x)={1 \over \sqrt{\pi}x}\exp\left\{-(\log{x}-1/2)^2\right\}$,
we have the following centroids indicated on the plot: the mode $\,\nu_0=\exp\left\{\mu -\sigma^2\right\}=1$, the median $\,\nu_1=\exp\left\{\mu\right\}=e^{1/2}$, 
the mean $\,\nu_2=\exp\left\{\mu+{\sigma^2 \over 2}\right\}=e^{3/4}$, and the $4$-mean $\,\nu_4=\exp\left\{\mu+{3\sigma^2 \over 2}\right\}=e^{5/4}$.}
\label{fig:LogNorm}
\end{figure} 

\medskip
\noindent
Before presenting the general approach in Sect.~\ref{sec:StocDom}, we show how stochastic dominance method can be used to
establish true positive skewness of exponential random variables.
\begin{ex}[Exponential distribution]
Consider an exponential random variable $X$ with parameter $\lambda>0$.
Without loss of generality let $\lambda=1$. 
Equation \eqref{eqn:nupDefAlt} implies
$$\int\limits_0^{\nu_p} (\nu_p-x)^{p-1} e^{-x}\, dx \,= \int\limits_{\nu_p}^\infty (x-\nu_p)^{p-1} e^{-x}\, dx.$$
This simplifies to
\begin{equation}\label{eqn:expmain}
\int\limits_0^{\nu_p} x^{p-1} e^x\, dx=\Gamma(p).
\end{equation}

\medskip
\noindent
Differentiating ${d \over dp}$ in \eqref{eqn:expmain} yields
\begin{equation}\label{eqn:expmain1}
\nu_p^{p-1} e^{\nu_p}\,{d \nu_p \over dp} \,+\int\limits_0^{\nu_p} x^{p-1} e^x\,  \log{x} \, dx \,=\int\limits_0^\infty  x^{p-1} \, e^{-x}\, \log{x} \, dx
\end{equation}
Next, we observe that the gamma distribution with the p.d.f. ${1 \over \Gamma(p)}x^{p-1} \, e^{-x} {\bf 1}_{(0,\infty)}(x)$ stochastically dominates 
the distribution with the p.d.f. ${1 \over \Gamma(p)}x^{p-1} \, e^x {\bf 1}_{(0, \nu_p)}(x)$.
Thus, Lemma~\ref{lem:StD} implies
\begin{equation}\label{eqn:expmain2}
\int\limits_0^{\nu_p} x^{p-1} e^x\,  \log{x} \, dx \,< \int\limits_0^\infty  x^{p-1} \, e^{-x}\, \log{x} \, dx
\end{equation}
as $\log{x}$ is an increasing function.
Consequently, equations \eqref{eqn:expmain1} and \eqref{eqn:expmain2} imply ${d \nu_p \over dp}>0$ for all $p>0$.
Hence, exponential random variables are proved to be {\it truly positively skewed} (Def.~\ref{def:TPS}) 
and, as the mode $\nu_0=0$, {\it truly mode positively skewed} (Def.~\ref{def:TPSmode}).
\end{ex}

\section{True positive skewness via stochastic dominance}\label{sec:StocDom}
For a given $p \in \mathcal{D}$, the theoretical $p$-mean $\nu_p$ defined in \eqref{eqn:nupDefAlt} solves
\begin{equation}\label{eqn:genNu_p_Int}
H_p:=\int\limits_0^{\nu_p -L} x^{p-1} \,f(\nu_p-x)\,dx=\int\limits_0^{R-\nu_p} x^{p-1} \,f(\nu_p+x)\,dx.
\end{equation}

\medskip
\noindent
Next, we state and prove a criterion for $\nu_p$ to be increasing, and therefore for the p.d.f. $f(x)$ to be truly  positively skewed.
\begin{thm}\label{thm:TPSkew}
Consider a continuous random variable with p.d.f. $f(x)$  supported on $supp(f)=(L,R)$, and a real number $p$ in the interior of $\mathcal{D}$.
If p.d.f. ${1 \over H_p}x^{p-1}\,f(\nu_p+x){\bf 1}_{(0,R-\nu_p)}(x)$ exhibits strict stochastic dominance over 
p.d.f. ${1 \over H_p}x^{p-1}\,f(\nu_p-x){\bf 1}_{(0,\nu_p -L)}(x)$, then function $\nu_p$ is increasing at $p$.
Consequently, if the above stochastic dominance holds for all $p$ in the interior of $\mathcal{D}$, the distribution is {\it truly positively skewed} (Def.~\ref{def:TPS}).
\end{thm}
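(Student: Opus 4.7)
The plan is to apply the implicit function theorem to the defining relation \eqref{eqn:genNu_p_Int}. Set
$$G(q,a) \;=\; \int_0^{a-L} y^{q-1} f(a-y)\, dy \;-\; \int_0^{R-a} y^{q-1} f(a+y)\, dy,$$
so that $G(q,\nu_q)\equiv 0$ on the interior of $\mathcal{D}$. Once we know $G$ is $C^1$ near $(p,\nu_p)$ and $\partial_a G(p,\nu_p) \neq 0$, we will have
$$\left.\frac{d\nu_q}{dq}\right|_{q=p} \;=\; -\,\frac{\partial_q G(p,\nu_p)}{\partial_a G(p,\nu_p)},$$
so the theorem reduces to checking the two signs $\partial_q G(p,\nu_p)<0$ and $\partial_a G(p,\nu_p)>0$.

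For the numerator, differentiating under the integral gives
$$\partial_q G(p,\nu_p) \;=\; \int_0^{\nu_p-L} y^{p-1}\log y \cdot f(\nu_p-y)\, dy \;-\; \int_0^{R-\nu_p} y^{p-1}\log y \cdot f(\nu_p+y)\, dy,$$
which equals $H_p\bigl(E[\log Y^-] - E[\log Y^+]\bigr)$, where $Y^-$ and $Y^+$ denote the random variables with the two densities in the theorem. Because $\log$ is strictly increasing on $(0,\infty)$ and $Y^+$ strictly stochastically dominates $Y^-$, Lemma~\ref{lem:StD} yields $E[\log Y^+] > E[\log Y^-]$, hence $\partial_q G(p,\nu_p) < 0$.

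For the denominator I would first rewrite $G$ via the substitutions $x=a-y$ and $x=a+y$ as
$$G(q,a) \;=\; \int_L^a (a-x)^{q-1} f(x)\, dx \;-\; \int_a^R (x-a)^{q-1} f(x)\, dx.$$
For $q>1$ the boundary contributions at $x=a$ vanish, so Leibniz's rule produces
$$\partial_a G(q,a) \;=\; (q-1)\, E\!\left[|X-a|^{q-2}\right] \;>\; 0,$$
with the integrability of $|X-a|^{q-2}$ near $a$ coming from continuity of $f$ and $q>1$, and at infinity from $q\in\mathcal{D}$. The degenerate case $q=1$ must be handled separately: there $G(1,a)=2F_X(a)-1$, so $\partial_a G(1,\nu_1)=2f(\nu_1)>0$. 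Combining the two signs gives $\left.d\nu_q/dq\right|_{q=p}>0$, which is the desired local monotonicity at $p$; iterating over every $p$ in the interior of $\mathcal{D}$ and invoking continuity of $p\mapsto\nu_p$ delivers the true positive skewness conclusion via Def.~\ref{def:TPS}.

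The main technical obstacle I anticipate is justifying differentiation under the integral in $\partial_q G$ and $\partial_a G$ uniformly in a neighborhood of $p$, given only the hypotheses that $f$ is continuous on $(L,R)$ and that $p$ is interior to $\mathcal{D}$. For $\partial_q G$ the issue is the logarithmic singularity of $y^{q-1}\log y$ at $y=0$ together with the decay (or growth) of $f$ near the endpoints of support; for $\partial_a G$ it is the $|X-a|^{q-2}$ singularity at $x=a$ and the tail behavior as $|x|\to\infty$. Both are controlled by a local domination argument using that $|\log y|$ grows more slowly than any power of $y$, so $E[|X|^{r}]$ is finite for $r$ in a neighborhood of $p-1$, and this is exactly what lets one pass the derivative inside the integral.
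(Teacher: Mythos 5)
Your proposal is correct and follows essentially the same route as the paper: implicitly differentiating the defining identity \eqref{eqn:genNu_p_Int} in $p$, obtaining the numerator's sign from Lemma~\ref{lem:StD} applied to the strictly increasing function $\log$, and recognizing the denominator as $(p-1)\,E\big[|X-\nu_p|^{p-2}\big]>0$, exactly as in \eqref{eqn:dnupSimple}; your extra care about differentiating under the integral is a welcome refinement rather than a different argument. The only superfluous piece is the separate $q=1$ case, since $\mathcal{D}\subseteq[1,\infty)$ forces every $p$ in the interior of $\mathcal{D}$ to satisfy $p>1$.
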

\begin{proof}
For $p$ in the interior of $\mathcal{D}$, we use Leibniz integral rule to obtain
\begin{align*}
0 =& {d \over dp}\left(\int\limits_L^{\nu_p} (\nu_p-x)^{p-1} \,f(x)\,dx ~-\int\limits_{\nu_p}^R (x-\nu_p)^{p-1} \,f(x)\,dx \right)\\
=& \left((p-1)\int\limits_L^{\nu_p} (\nu_p-x)^{p-2} \,f(x)\,dx ~+\int\limits_L^{\nu_p} (\nu_p-x)^{p-1} \,f(x)\,dx\right){d \nu_p \over dp}\\
&\quad +\left((p-1)\int\limits_{\nu_p}^R (x-\nu_p)^{p-2} \,f(x)\,dx ~-\int\limits_{\nu_p}^R (x-\nu_p)^{p-1} \,f(x)\,dx\right){d \nu_p \over dp}\\
&\quad +\int\limits_0^{\nu_p -L} x^{p-1}\, \log{x}\, f(\nu_p-x)\, dx ~-~ \int\limits_0^{R-\nu_p} x^{p-1}\,\log{x}\, f(\nu_p+x)\, dx,
\end{align*}
which simplifies to
\begin{equation}\label{eqn:dnupSimple}
{d \nu_p \over dp}={\int\limits_0^{R-\nu_p} x^{p-1}\, \log{x}\, f(\nu_p+x)\, dx -  \int\limits_0^{\nu_p -L} x^{p-1}\,\log{x}\, f(\nu_p-x)\, dx \over (p-1) \left[\int\limits_0^{\nu_p -L} x^{p-2} f(\nu_p-x)dx+ \int\limits_0^{R-\nu_p} x^{p-2}  f(\nu_p+x)\big]dx\right]}.
\end{equation}
Under the strict stochastic dominance assumption of the theorem, Lemma \ref{lem:StD} implies
\begin{equation}\label{eqn:main}
\int\limits_0^{R-\nu_p} x^{p-1}\, \log{x}\, f(\nu_p+x)\, dx ~>~ \int\limits_0^{\nu_p -L} x^{p-1}\,\log{x}\, f(\nu_p-x)\, dx.
\end{equation}
\noindent
Together, equations \eqref{eqn:dnupSimple} and \eqref{eqn:main} imply ${d \nu_p \over dp}>0$ for all $p$ in the interior of $\mathcal{D}$.
\end{proof}

\bigskip
\noindent
\begin{Rem}\label{rem:SD}
Returning to the discussion in the introduction of this paper (Sec.~\ref{sec:Intro}), Thm.~\ref{thm:TPSkew} states that if we 
dissect $f(x)$ at $\nu_p$ into the left and the right parts so that when we reflect the left part, and align both at zero,
and multiply each by ${1 \over H_p}x^{p-1}$, then each part is converted into a distribution over positive half-line,
i.e., densities ${1 \over H_p}x^{p-1}\,f(\nu_p-x){\bf 1}_{(0,\nu_p -L)}(x)$ and ${1 \over H_p}x^{p-1}\,f(\nu_p+x){\bf 1}_{(0,R-\nu_p)}(x)$.
If p.d.f. ${1 \over H_p}x^{p-1}\,f(\nu_p+x){\bf 1}_{(0,R-\nu_p)}(x)$ 
obtained from the right part exhibits stochastic dominance over p.d.f. ${1 \over H_p}x^{p-1}\,f(\nu_p-x){\bf 1}_{(0,\nu_p -L)}(x)$ obtained from the left part,
then this should imply that the left tail is ``spreading short" and the right tail is ``spreading longer".
Hence, such distribution is truly positively skewed.

Importantly, this stochastic dominance argument remains valid and true positive skewness can be established even in the case of a distribution with infinite first moment and
$\,\mathcal{D} \subseteq [1,2)$.  
See Pareto distribution example in Sect.~\ref{sec:Pareto}.
This is one of the advantages of using Def.~\ref{def:TPS} and \ref{def:TPSmode} in determining the sign of skewness.
\end{Rem}

\bigskip
\noindent
Next, we present a criterion for the p.d.f. ${1 \over H_p}x^{p-1}\,f(\nu_p+x){\bf 1}_{(0,R-\nu_p)}(x)$ to exhibit strict stochastic dominance over 
the p.d.f. ${1 \over H_p}x^{p-1}\,f(\nu_p-x){\bf 1}_{(0,\nu_p -L)}(x)$.
\begin{lem}\label{lem:StochDomCrit}
Consider a continuous random variable supported over $(L,R)$ with p.d.f. $f(x)$, and a real number $p \in \mathcal{D}$.
Suppose there exists $c>0$ such that $f(\nu_p-c)=f(\nu_p+c)$, and $f(\nu_p-x)>f(\nu_p+x)$ for $x \in (0,c)$, while $f(\nu_p-x)<f(\nu_p+x)$ for $x>c$.
Suppose also that $\nu_p-L \leq R-\nu_p$.
Then, a random variable with p.d.f. ${1 \over H_p}x^{p-1}\,f(\nu_p+x){\bf 1}_{(0,R-\nu_p)}(x)$ exhibits strict stochastic dominance over 
a random variable with p.d.f. ${1 \over H_p}x^{p-1}\,f(\nu_p-x){\bf 1}_{(0,\nu_p -L)}(x)$.
\end{lem}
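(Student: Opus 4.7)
The plan is to convert the stochastic dominance statement into a one-dimensional single-crossing argument for a signed integral, then use the definition of $\nu_p$ to pin down the endpoint value.

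First I would restate what must be proved. Let $F_L$ and $F_R$ denote the cumulative distribution functions of the two given densities. Since both densities share the same normalizing constant $H_p$, strict stochastic dominance of the ``right" density over the ``left" one is equivalent to showing
$$G(t):=\int\limits_0^t y^{p-1}\bigl[f(\nu_p-y)\mathbf{1}_{(0,\nu_p-L)}(y)-f(\nu_p+y)\mathbf{1}_{(0,R-\nu_p)}(y)\bigr]\,dy\;\geq\;0$$
for every $t\geq 0$, with strict inequality somewhere. This is clean because $F_L(t)-F_R(t)=G(t)/H_p$.

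Next I would analyze the sign of the integrand $\eta(y)$ defining $G$. The hypothesis forces $c\leq \nu_p-L$, since $f(\nu_p-x)>f(\nu_p+x)>0$ on $(0,c)$ requires $f(\nu_p-x)$ to be nonzero there. Under the assumption $\nu_p-L\leq R-\nu_p$, I would split $(0,R-\nu_p)$ into three intervals: on $(0,c)$ the integrand is strictly positive by the first sign hypothesis on $f$; on $(c,\nu_p-L)$ it is strictly negative by the second sign hypothesis; and on the remaining ``tail" $(\nu_p-L,R-\nu_p)$ only the right density contributes, giving $\eta(y)=-y^{p-1}f(\nu_p+y)\leq 0$. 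Thus $\eta$ exhibits exactly one sign change on $(0,R-\nu_p)$, passing from positive to negative at $y=c$.

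From this single-crossing structure I would conclude: $G(0)=0$, $G$ is strictly increasing on $(0,c)$ and strictly decreasing on $(c,R-\nu_p)$, and the crucial endpoint value $G(R-\nu_p)=0$ follows directly from the defining identity \eqref{eqn:genNu_p_Int} of $\nu_p$, since both halves of the integral equal $H_p$. Consequently $G(t)>0$ for every $t\in(0,R-\nu_p)$ and $G(t)=0$ for $t\geq R-\nu_p$, which is exactly strict stochastic dominance. The only delicate bookkeeping — and really the only place one might stumble — is ensuring $c\leq\nu_p-L$ and carefully accounting for the asymmetric tail interval $(\nu_p-L,R-\nu_p)$, where the two densities have mismatched supports; once that is handled, the cancellation $H_p-H_p=0$ at the right endpoint makes the single-crossing argument close cleanly.
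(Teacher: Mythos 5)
Your proof is correct and takes essentially the same route as the paper's: both exploit the single sign change of the integrand at $y=c$ to show that the difference of the two CDFs increases on $(0,c)$ and decreases thereafter, with the endpoint value pinned down by the common normalization coming from \eqref{eqn:genNu_p_Int}. The only cosmetic difference is that the paper anchors the argument at $x=\nu_p-L$ via $F_L(\nu_p-L)=1\geq F_R(\nu_p-L)$ while you anchor it at $x=R-\nu_p$ via $G(R-\nu_p)=H_p-H_p=0$; these are the same observation, and your explicit checks that $c\leq \nu_p-L$ and that only the right density contributes on $(\nu_p-L,\,R-\nu_p)$ fill in bookkeeping the paper leaves implicit.
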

\begin{proof}
Observe that $c$ is the maximum and the only local extrema of 
$$\int\limits_0^x {1 \over H_p}y^{p-1}\,f(\nu_p-y) \,dy - \int\limits_0^x {1 \over H_p}y^{p-1}\,f(\nu_p+y) \,dy$$
for $x \in [0,\nu_p-L]$, while 
$$\int\limits_0^{\nu_p-L} {1 \over H_p}y^{p-1}\,f(\nu_p-y) \,dy - \int\limits_0^{\nu_p-L} {1 \over H_p} y^{p-1}\,f(\nu_p+y) \,dy \geq 0,$$
where $\int\limits_0^{\nu_p-L} {1 \over H_p}y^{p-1}\,f(\nu_p-y) \,dy=1$.
Hence,
$$\int\limits_0^x {1 \over H_p} y^{p-1}\,f(\nu_p-y) \,dy - \int\limits_0^x {1 \over H_p} y^{p-1}\,f(\nu_p+y) \,dy >0$$
for all $x \in(0, R-\nu_p)$.
\end{proof}

\bigskip
\noindent
The following simple criterion follows immediately from the definition of stochastic dominance.
\begin{prop}\label{prop:StochDomCritEZ}
Consider a continuous random variable supported over $(L,R)$ with p.d.f. $f(x)$.
Suppose $f(x)$ is a decreasing function for $x \in (L,R)$.
Then, for all $p \in \mathcal{D}$, density function ${1 \over H_p}x^{p-1}\,f(\nu_p+x){\bf 1}_{(0,R-\nu_p)}(x)$ exhibits strict stochastic dominance over 
density function ${1 \over H_p}x^{p-1}\,f(\nu_p-x){\bf 1}_{(0,\nu_p -L)}(x)$.
\end{prop}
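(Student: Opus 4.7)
My plan is to reduce the claim to a direct comparison of the two cumulative distribution functions. Writing $G_L(x)$ and $G_R(x)$ for the CDFs associated with the ``left'' density $\frac{1}{H_p}y^{p-1}f(\nu_p-y)\mathbf{1}_{(0,\nu_p-L)}(y)$ and the ``right'' density $\frac{1}{H_p}y^{p-1}f(\nu_p+y)\mathbf{1}_{(0,R-\nu_p)}(y)$, I need to verify that $G_R(x)\leq G_L(x)$ for every $x$, with strict inequality on a set of positive measure.

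The first thing I would do is establish the geometric inequality $\nu_p-L\leq R-\nu_p$. This is not an assumed hypothesis, but it must follow from $f$ being decreasing (note that a decreasing density already forces $L$ to be finite). I would argue by contradiction using the defining equation \eqref{eqn:genNu_p_Int}: if $\nu_p-L>R-\nu_p$, split the left integral at $y=R-\nu_p$. On $(0,R-\nu_p]$, the fact that $f$ is decreasing gives $f(\nu_p-y)\geq f(\nu_p+y)$, so this piece alone is at least $H_p$. The remaining piece on $(R-\nu_p,\nu_p-L]$ integrates a strictly positive integrand, which contradicts the equality of the two sides of \eqref{eqn:genNu_p_Int}.

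With the inclusion $\nu_p-L\leq R-\nu_p$ in hand, the stochastic dominance is essentially free. On $(0,\nu_p-L]$ both indicators equal $1$, and the integrand of $G_L(x)-G_R(x)$ is $\frac{y^{p-1}}{H_p}\bigl[f(\nu_p-y)-f(\nu_p+y)\bigr]$, which is nonnegative because $f$ is decreasing; hence $G_L-G_R$ is nondecreasing on this interval, starting from $0$, so it stays nonnegative. On $(\nu_p-L,R-\nu_p]$ the left CDF has already saturated at $1$ while $G_R(x)\leq 1$, and past $R-\nu_p$ both CDFs equal $1$. Thus $G_R\leq G_L$ pointwise.

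For strictness, I interpret ``decreasing'' in the strict sense (since a uniform density would produce identical CDFs and no strict dominance is possible there). Under that interpretation, the integrand $f(\nu_p-y)-f(\nu_p+y)$ is strictly positive on the open interval $(0,\nu_p-L)$, so $G_L(x)>G_R(x)$ on a nontrivial subinterval, whence $G_L\not\equiv G_R$, which is precisely the condition for strict stochastic dominance in Lemma~\ref{lem:StD}. The main subtlety, as I see it, is the preliminary step of deriving $\nu_p-L\leq R-\nu_p$ from the monotonicity hypothesis alone; once that is pinned down the remainder is routine bookkeeping on CDFs, which is presumably why the author calls this ``immediate.''
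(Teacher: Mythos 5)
Your proof is correct, and it is worth noting that the paper offers no proof at all: Prop.~\ref{prop:StochDomCritEZ} is asserted to follow ``immediately from the definition of stochastic dominance,'' so your write-up supplies exactly the bookkeeping the author left implicit, along the intended route (monotonicity of $f$ gives a pointwise comparison of the two integrands, hence of the CDFs). Two remarks. First, the step you flag as the main subtlety, $\nu_p-L\leq R-\nu_p$, is proved correctly by your contradiction argument, but it is in fact dispensable for the dominance itself: for $x\leq \nu_p-L$ the integrand of $G_L-G_R$ is nonnegative even without it, since for $y\geq R-\nu_p$ the right-hand integrand $y^{p-1}f(\nu_p+y)$ simply vanishes, while for $x>\nu_p-L$ one has $G_L(x)=1\geq G_R(x)$; similarly, strictness only requires the integrand to be strictly positive on $\bigl(0,\min(\nu_p-L,\,R-\nu_p)\bigr)$, a nonempty interval because \eqref{eqn:genNu_p_Int} forces $L<\nu_p<R$. (Your use of the inequality to keep $\nu_p+y$ inside the support on all of $(0,\nu_p-L)$ is fine too; it is just more than is needed.) Second, your caveat about the meaning of ``decreasing'' is a genuine catch: for the uniform density, \eqref{eqn:genNu_p_Int} places $\nu_p$ at the midpoint of $(L,R)$ and the two rescaled densities coincide, so under the weak (nonincreasing) reading the proposition as stated is false, and strict decrease --- or, as one can check, nonincreasing and nonconstant --- is the correct hypothesis. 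Your reading is the right repair of the statement, and your observation that a decreasing density forces $L>-\infty$ is also correct and needed for the limits to make sense.
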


\section{Examples of true positive skewness}\label{sec:ExPosSkew}
In this section, we will use stochastic dominance method and the toolbox developed in Sect.~\ref{sec:StocDom}
for establishing true positive skewness for the gamma distribution, the unimodal beta distribution with the mode in the first half interval,
the log-normal distribution, and Pareto distribution.
\subsection{Gamma distribution}\label{sec:gamma}
Consider a gamma random variable with p.d.f. $$f(x)={1 \over \Gamma(\alpha)}\lambda^\alpha x^{\alpha-1} e^{-\lambda x}$$ 
over $(L,R)=(0,\infty)$, with parameters $\,\alpha >0\,$ and $\,\lambda>0$.  
We will consider cases $\,\alpha >1\,$ and $\,0<\alpha <1\,$ separately.

\bigskip
\noindent
{\bf Case i:} $\alpha >1$. Here, the mode equals $\,\nu_0={\alpha -1 \over \lambda}$.
Differentiating 
$$\log f(\nu_p-x)-\log f(\nu_p+x)=2\lambda x +(\alpha-1)\log(\nu_p-x)-(\alpha-1)\log(\nu_p+x)$$
with respect to $x$ and setting it equal to zero yields
\begin{equation}\label{eqn:cMinGamma}
x^2=\nu_p^2-{(\alpha-1)\nu_p \over \lambda}=\nu_p(\nu_p -\nu_0).
\end{equation}
\noindent
Suppose $0<\nu_p \leq \nu_0$ for some $p$, then \eqref{eqn:cMinGamma} has no positive real solution. Thus, since $f(0)=0<f(2\nu_p)$, we have  $f(\nu_p-x)< f(\nu_p+x)$ for all $x>0$. Therefore, assumption $0<\nu_p \leq \nu_0$ contradicts \eqref{eqn:genNu_p_Int}.

\medskip
\noindent
Therefore, $\nu_p> \nu_0$ for all $p \in \mathcal{D}$. Function ${f(\nu_p-x)\over f(\nu_p+x)}$ equals $1$ at $x=0$ and equals $0$ at $x=\nu_p$ with the only extremum at
$c^*=\sqrt{\nu_p(\nu_p -\nu_0)}$. Since by \eqref{eqn:genNu_p_Int}, function ${f(\nu_p-x)\over f(\nu_p+x)}$ cannot be $\leq 1$ for all $x>0$, the extremum $c^*$ 
is the location of the maximum of ${f(\nu_p-x)\over f(\nu_p+x)}$.
 Function ${f(\nu_p-x)\over f(\nu_p+x)}$ increases on $[0,c^*)$, and decreases on $(c^*,\nu_p)$. Thus, there is a point $c \in (c^*,\nu_p)$ such that ${f(\nu_p-c)\over f(\nu_p+c)}=1$ and the conditions in Lemma~\ref{lem:StochDomCrit} are satisfied.

We conclude that ${1 \over H_p}x^{p-1}\,f(\nu_p+x){\bf 1}_{(0,R-\nu_p)}(x)$ exhibits strict stochastic dominance over ${1 \over H_p}x^{p-1}\,f(\nu_p-x){\bf 1}_{(0,\nu_p -L)}(x)$.
Hence, $f(x)$ is {\it truly mode positively skewed} (Def.~\ref{def:TPSmode}) by Thm.~\ref{thm:TPSkew}.

\bigskip
\noindent
{\bf Case ii:} $0<\alpha <1$. Here, $f(x)$ is {\it truly positively skewed} (Def.~\ref{def:TPS}) by Prop.~\ref{prop:StochDomCritEZ} and Thm.~\ref{thm:TPSkew}.

\subsection{Beta distribution}\label{sec:beta}
Consider a beta random variable with p.d.f. $$f(x)={1 \over {\rm B}(\alpha,\beta)}x^{\alpha-1} (1-x)^{\beta-1}$$ 
over $(L,R)=(0,1)$, with parameters $\,\beta>\alpha >1$. 
Here, ${\rm B}(\alpha,\beta)={\Gamma(\alpha) \Gamma(\beta) \over \Gamma(\alpha+\beta)}$ denotes the {\it beta function}.
The mode equals $\,\nu_0={\alpha -1 \over \alpha+\beta -2}< {1 \over 2}$.
Differentiating 
\begin{align*}
\log f(\nu_p-x)-\log f(\nu_p+x)=&(\alpha-1)\log(\nu_p-x)-(\alpha-1)\log(\nu_p+x) \\
&~~+(\beta-1)\log(1-\nu_p+x)-(\beta-1)\log(1-\nu_p-x) 
\end{align*}
with respect to $x$ and setting it equal to zero, we obtain
$$(\alpha-1){\nu_p \over \nu_p^2-x^2}=(\beta-1){1-\nu_p \over (1-\nu_p)^2-x^2},$$
yielding
\begin{equation}\label{eqn:cMinBeta}
x^2=-(1-\nu_p)\nu_p{\alpha-1 -(\alpha+\beta-2)\nu_p \over \beta-1 -(\alpha+\beta-2)\nu_p}
=(1-\nu_p)\nu_p{\nu_p-\nu_0 \over (1-\nu_0)-\nu_p}.
\end{equation}
Recall that $\nu_0 < {1 \over 2}$.
Suppose $0<\nu_p \leq \nu_0$ for $p>0$, then \eqref{eqn:cMinBeta} has no positive real solution. 
Thus, since $f(\nu_p-\nu_p)=f(0)=0<f(2\nu_p)=f(\nu_p+\nu_p)$, we have  $f(\nu_p-x)< f(\nu_p+x)$ for all $x \in (0,1-\nu_p)$. 
Therefore, assumption $0<\nu_p \leq \nu_0$ contradicts \eqref{eqn:genNu_p_Int}. Hence, $\nu_p>\nu_0$.

\medskip
\noindent
Next, suppose $\nu_p \geq {1 \over 2}$ for $p>0$, then $1-\nu_p \leq \nu_p$ and
$$\left({1-\nu_p+x \over 1-\nu_p-x}\right)^{\beta-1} > \left({\nu_p+x \over \nu_p-x}\right)^{\alpha-1} \quad \forall x \in (0,1-\nu_p).$$
Thus, $f(\nu_p-x)> f(\nu_p+x)$ for all $x \in (0,\nu_p)$.
Hence, assumption $\nu_p \geq {1 \over 2}$ also contradicts equation \eqref{eqn:genNu_p_Int}.

\medskip
\noindent
Consequently, $\nu_0<\nu_p<{1 \over 2}$ for all $p \in \mathcal{D}$, and $c^*=\sqrt{(1-\nu_p)\nu_p{\nu_p-\nu_0 \over (1-\nu_0)-\nu_p}}$ 
is the only extremum of ${f(\nu_p-x)\over f(\nu_p+x)}$ in $(0,\nu_p)$. 
Now, ${f(\nu_p-0)\over f(\nu_p+0)}=1$ and ${f(\nu_p-\nu_p)\over f(\nu_p+\nu_p)}={f(0)\over f(2\nu_p)}=0$. 
Since by \eqref{eqn:genNu_p_Int}, ${f(\nu_p-x)\over f(\nu_p+x)}$ cannot be $\leq 1$ for all $x>0$, the extremum $c^*$ 
is the point of the maximum of ${f(\nu_p-x)\over f(\nu_p+x)}$.
Therefore, there is a point $c \in (c^*,\nu_p)$ such that ${f(\nu_p-c)\over f(\nu_p+c)}=1$ and the conditions in Lemma~\ref{lem:StochDomCrit} are satisfied.

\medskip
\noindent
We conclude that ${1 \over H_p}x^{p-1}\,f(\nu_p+x){\bf 1}_{(0,R-\nu_p)}(x)$ exhibits strict stochastic dominance over 
${1 \over H_p}x^{p-1}\,f(\nu_p-x){\bf 1}_{(0,\nu_p -L)}(x)$.
Thus, $f(x)$ is  {\it truly mode positively skewed} (Def.~\ref{def:TPSmode})  by Thm.~\ref{thm:TPSkew}.

\subsection{Log-normal distribution}\label{sec:LogNormal}
Consider a log-normal random variable with p.d.f. $$f(x)={1 \over x\sqrt{2\pi \sigma^2}}\exp\left\{-{(\log{x}-\mu)^2 \over 2\sigma^2} \right\}$$ 
over $(L,R)=(0,\infty)$, with parameters $\,\mu\,$ and $\,\sigma^2$. Here, the mode equals $\,\nu_0=\exp\left\{\mu -\sigma^2\right\}$, the median is $\,\nu_1=\exp\left\{\mu\right\}$, and the mean $\,\nu_2=\exp\left\{\mu+{\sigma^2 \over 2}\right\}$. 
Next, we find $\nu_p$ and conclude that $\nu_p$ increases as a power of $p$.
See Fig.~\ref{fig:LogNorm}.

\begin{thm}\label{thm:LogNormal}
For a log-normal random variable with parameters $\,\mu\,$ and $\,\sigma^2$,
$$\nu_p=\exp\left\{\mu+{p-1 \over 2}\sigma^2\right\}$$
solves \eqref{eqn:genNu_p_Int} for all $\,p\in (0,\infty)$.
\end{thm}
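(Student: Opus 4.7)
The plan is to verify directly that the proposed $\nu_p=\exp\{\mu+\tfrac{p-1}{2}\sigma^2\}$ satisfies the defining equation \eqref{eqn:genNu_p_Int}, which for the log-normal (with $L=0$, $R=\infty$) takes the equivalent form
$$\int_{\nu_p}^{\infty}(x-\nu_p)^{p-1}f(x)\,dx=\int_{0}^{\nu_p}(\nu_p-x)^{p-1}f(x)\,dx.$$
Since $f$ is the density of $e^{\mu+\sigma Z}$ where $Z\sim\mathcal{N}(0,1)$, the natural move is to push both integrals onto the standard normal scale via the substitution $x=e^{\mu+\sigma z}$, under which $f(x)\,dx=\varphi(z)\,dz$ with $\varphi$ the standard normal density. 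Writing $z_p:=\frac{\log\nu_p-\mu}{\sigma}=\frac{(p-1)\sigma}{2}$, the defining equation becomes
$$\int_{z_p}^{\infty}\bigl(e^{\sigma z}-e^{\sigma z_p}\bigr)^{p-1}\varphi(z)\,dz=\int_{-\infty}^{z_p}\bigl(e^{\sigma z_p}-e^{\sigma z}\bigr)^{p-1}\varphi(z)\,dz$$
after pulling out the common factor $e^{(p-1)\mu}$.

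Next, I would shift each integral to the half-line by setting $u=z-z_p$ on the left and $u=z_p-z$ on the right. Completing the square in the Gaussian exponent yields the clean forms
$$\int_{0}^{\infty}\bigl(e^{\sigma u}-1\bigr)^{p-1}e^{(p-1)\sigma z_p}e^{-u^{2}/2}e^{-uz_p}\,du\quad\text{and}\quad\int_{0}^{\infty}\bigl(1-e^{-\sigma u}\bigr)^{p-1}e^{(p-1)\sigma z_p}e^{-u^{2}/2}e^{uz_p}\,du,$$
where I have absorbed the $e^{-z_p^{2}/2}$ factors. The key algebraic identities $e^{\sigma u}-1=e^{\sigma u/2}(e^{\sigma u/2}-e^{-\sigma u/2})$ and $1-e^{-\sigma u}=e^{-\sigma u/2}(e^{\sigma u/2}-e^{-\sigma u/2})$ then let me split the $(p-1)$-power and reveal the structure of the integrands.

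The whole point of the computation is that the exponential prefactor on the left integrand becomes $e^{(p-1)\sigma u/2-uz_p}$ and on the right $e^{-(p-1)\sigma u/2+uz_p}$. Because $z_p=\frac{(p-1)\sigma}{2}$ was chosen precisely so that $(p-1)\sigma/2=z_p$, both prefactors collapse to $1$, and both sides reduce to the same integral
$$\int_{0}^{\infty}\bigl(e^{\sigma u/2}-e^{-\sigma u/2}\bigr)^{p-1}e^{-u^{2}/2}\,du,$$
proving the identity. I do not anticipate a real obstacle here; the only delicate point is bookkeeping the exponentials through the two substitutions carefully enough to see the cancellation, and verifying that the resulting integral is finite for all $p>0$ (which follows from $e^{\sigma u/2}-e^{-\sigma u/2}\sim\sigma u$ near $0$ and Gaussian decay at infinity), so that every manipulation is justified on the full range $p\in(0,\infty)$ rather than just $p\in\mathcal{D}$.
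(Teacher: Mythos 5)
Your verification is correct and is essentially the paper's own proof: both arguments pass to the logarithmic (Gaussian) scale, shift the two tails onto $(0,\infty)$, and observe that the choice $\log\nu_p=\mu+\tfrac{(p-1)\sigma^2}{2}$ makes the two transformed integrands identical pointwise --- the paper's matching of the linear coefficients in the exponents of \eqref{eqn:LogNormLHS} and \eqref{eqn:LogNormRHS} is exactly your prefactor collapse $e^{(p-1)\sigma u/2-uz_p}=1$. The only differences are cosmetic (you standardize by $\sigma$ and symmetrize via $e^{\sigma u/2}-e^{-\sigma u/2}$, while the paper uses $z=\log\big((\nu_p+y)/\nu_p\big)$ and keeps $\log\nu_p$ explicit) plus one small bonus on each side: you explicitly check finiteness for all $p>0$, while the paper's form also records the ``if and only if,'' i.e.\ uniqueness of $\nu_p$, which is reused in Sect.~\ref{sec:p01}.
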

\begin{proof}
Without loss of generality, let $\mu=0$. We need to show that $\,\nu_p=\exp\left\{{p-1 \over 2}\sigma^2\right\}\,$ with 
$\,f(x)={1 \over x\sqrt{2\pi \sigma^2}}\exp\left\{-{(\log{x})^2 \over 2\sigma^2} \right\}\,$ satisfy \eqref{eqn:genNu_p_Int} for all $p\in (0,\infty)$.

\medskip
\noindent
First, letting $\,z=\log\nu_p-\log(\nu_p-x)\,$
in the left hand side of \eqref{eqn:genNu_p_Int}, we have 
\begin{align}\label{eqn:LogNormLHS}
\int\limits_0^{\nu_p} x^{p-1} &\,f(\nu_p-x)\,dx
={1 \over \sqrt{2\pi \sigma^2}}\nu_p^{p-1}\int\limits_0^\infty \big(1-e^{-z}\big)^{p-1}\exp\left\{-{\left(z-\log\nu_p \right)^2 \over 2\sigma^2} \right\}dz \nonumber \\
=&{1 \over \sqrt{2\pi \sigma^2}}\nu_p^{p-1}\int\limits_0^\infty \big(e^z-1\big)^{p-1}\exp\left\{-{z^2+(\log\nu_p)^2 \over 2\sigma^2}+\left({\log\nu_p \over \sigma^2}+1-p\right)z \right\}dz.
\end{align}
Next, we let $\,z=\log(\nu_p+x)-\log\nu_p\,$ 
in the right hand side of \eqref{eqn:genNu_p_Int}, obtaining
\begin{align}\label{eqn:LogNormRHS}
\int\limits_0^\infty  x^{p-1} &\,f(\nu_p+x)\,dx
={1 \over \sqrt{2\pi \sigma^2}}\nu_p^{p-1}\int\limits_0^\infty \big(e^z-1\big)^{p-1}\exp\left\{-{\left(z+\log\nu_p \right)^2 \over 2\sigma^2} \right\}dz \nonumber \\
=&{1 \over \sqrt{2\pi \sigma^2}}\nu_p^{p-1}\int\limits_0^\infty \big(e^z-1\big)^{p-1}\exp\left\{-{z^2+(\log\nu_p)^2 \over 2\sigma^2}-{\log\nu_p \over \sigma^2}z \right\}dz.
\end{align}
Hence, the integrals in \eqref{eqn:LogNormLHS} and \eqref{eqn:LogNormRHS} are equal if and only if $\,\nu_p=\exp\left\{{p-1 \over 2}\sigma^2\right\}$. 
\end{proof}

\begin{cor}
A log-normal random variable is truly mode positively skewed (Def.~\ref{def:TPSmode}).
\end{cor}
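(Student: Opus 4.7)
The plan is to read the corollary off directly from Theorem~\ref{thm:LogNormal}. Since the log-normal distribution has finite moments of all orders, the domain is $\mathcal{D}=[1,\infty)$, and hence $\mathcal{D}_0=\{0\}\cup[1,\infty)$. First I would invoke Theorem~\ref{thm:LogNormal} which supplies the closed form
$$\nu_p=\exp\!\left\{\mu+\tfrac{p-1}{2}\sigma^2\right\}$$
for every $p\in(0,\infty)$, in particular for every $p\in[1,\infty)$. Because $\sigma^2>0$, this is a strictly increasing function of $p$ throughout $[1,\infty)$; that dispatches monotonicity on the continuous piece of $\mathcal{D}_0$.

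Next I would handle the isolated point $p=0$. The mode is $\nu_0=\exp\{\mu-\sigma^2\}$, while the smallest value of $\nu_p$ over $p\in[1,\infty)$ is $\nu_1=\exp\{\mu\}$. Since $\sigma^2>0$, we have $\exp\{\mu-\sigma^2\}<\exp\{\mu\}=\nu_1$, so $\nu_0<\nu_p$ for every $p\in\mathcal{D}$. Combining the two observations, $\nu_p$ is a strictly increasing function of $p$ on $\mathcal{D}_0$, which by Definition~\ref{def:TPSmode} is exactly the assertion that the log-normal distribution is truly mode positively skewed.

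There is no genuine obstacle here: the substantive computation is packaged inside Theorem~\ref{thm:LogNormal}, and the corollary reduces to the single observation that $\sigma^2>0$ forces both the monotonicity of $p\mapsto\mu+\tfrac{p-1}{2}\sigma^2$ and the mode inequality $\mu-\sigma^2<\mu$. One could equally well observe that the explicit formula actually shows $\nu_p$ is strictly increasing on all of $(0,\infty)$, and note $\nu_0=\exp\{\mu-\sigma^2\}<\exp\{\mu-\sigma^2/2\}=\lim_{p\to 0^+}\nu_p$, which gives a slightly stronger extended-domain monotonicity essentially for free.
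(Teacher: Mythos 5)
Your proposal is correct and matches the paper's approach: the corollary is stated immediately after Theorem~\ref{thm:LogNormal} precisely because it follows by reading off the closed form $\nu_p=\exp\{\mu+\tfrac{p-1}{2}\sigma^2\}$, which is strictly increasing in $p$, together with the mode value $\nu_0=\exp\{\mu-\sigma^2\}<\nu_1=\exp\{\mu\}$. Your careful handling of the isolated point $p=0$ in $\mathcal{D}_0$ and your closing remark about the limit $\lim_{p\downarrow 0}\nu_p=\exp\{\mu-\sigma^2/2\}$ are both consistent with the paper's own observations (cf.\ the discontinuity noted in \eqref{eqn:LogNormDisc}).
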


\medskip
\noindent
Observe that $\,\nu_2=\exp\left\{\mu+{\sigma^2 \over 2}\right\}\,$ and $\,\nu_4=\exp\left\{\mu+{3\sigma^2 \over 2}\right\}\,$ together
with the standard deviation $\,\sqrt{\exp\left\{\sigma^2\right\}-1}\exp\left\{\mu+{\sigma^2 \over 2}\right\}\,$
and the Pearson's moment coefficient of skewness $\,\gamma=\left(\exp\left\{\sigma^2\right\}+2\right)\sqrt{\exp\left\{\sigma^2\right\}-1}\,$
satisfy the equality in \eqref{eqn:cubic3}.

\subsection{Pareto distribution}\label{sec:Pareto}
For a given parameter $\,\alpha >0$, consider a random variable with p.d.f. $$f(x)={\alpha \over x^{\alpha+1} }$$ 
over $(L,R)=(1,\infty)$. Here, the mode equals $\,\nu_0=1$, and $\mathcal{D}=[1,\alpha+1)$. 
Equation \eqref{eqn:genNu_p_Int} implies
\begin{equation}\label{eqn:a1Pareto}
\int\limits_0^{\nu_p-1} {x^{p-1} \over (\nu_p-x)^{\alpha+1}}\,dx=\int\limits_0^\infty {x^{p-1} \over (\nu_p+x)^{\alpha+1}}\,dx.
\end{equation}
Substituting $z=x/\nu_p$ into \eqref{eqn:a1Pareto}, we have
\begin{equation}\label{eqn:a1ParetoZ}
Z_p:=\int\limits_0^{1-1/\nu_p} {z^{p-1} \over (1-z)^{\alpha+1}}\,dz=\int\limits_0^\infty {z^{p-1} \over (1+z)^{\alpha+1}}\,dz,
\end{equation}
where $Z_p=\alpha^{-1} \nu_p^{1+\alpha-p} H_p$. Thus, $\nu_p>1=\nu_0$.
Now, since ${z^{p-1} \over Z_p(1+z)^{\alpha+1}}{\bf 1}_{(0,\infty)}(z)$ exhibits strict stochastic dominance over 
${z^{p-1} \over Z_p(1-z)^{\alpha+1}}{\bf 1}_{(0,1-1/\nu_p)}(z)$, Lemma \ref{lem:StD} implies
\begin{equation}\label{eqn:a1ParetoLog}
\int\limits_0^\infty \log{z}\, {z^{p-1} \over (1+z)^{\alpha+1}}\,dz ~> ~ \int\limits_0^{1-1/\nu_p} \log{z}\,{z^{p-1} \over (1-z)^{\alpha+1}}\,dz.
\end{equation}
Next, we differentiate both integrals in \eqref{eqn:a1ParetoZ} with respect to $p$, obtaining
$$\nu_p^{\alpha-p}(\nu_p-1)^{p-1}{d \nu_p \over dp} +\int\limits_0^{1-1/\nu_p} \log{z}\,{z^{p-1} \over (1-z)^{\alpha+1}}\,dz=\int\limits_0^\infty \log{z}\, {z^{p-1} \over (1+z)^{\alpha+1}}\,dz.$$
Therefore, by \eqref{eqn:a1ParetoLog},
$${d \nu_p \over dp}=\nu_p^{p-\alpha}(\nu_p-1)^{1-p}\left(\int\limits_0^\infty \log{z}\, {z^{p-1} \over (1+z)^{\alpha+1}}dz \,-\, \int\limits_0^{1-1/\nu_p} \log{z}\,{z^{p-1} \over (1-z)^{\alpha+1}}dz\right) ~>0.$$ 
Hence, $f(x)$ is {\it truly mode positively skewed}.
Notice that for $\alpha \in (0,1)$, the quantities $\nu_1$, $\nu_2$, and $\gamma$ do not exist. Yet, using Def.~\ref{def:TPS} and stochastic dominance, we established the positive sign of skewness. Naturally, this was expected of a distribution with only right tail and no left tail.
This sends us back to the discussion in Remark~\ref{rem:SD}.

\section{Extending to $p \in (0,1)$}\label{sec:p01}
In this section we will show that Definition~\ref{def:meansAlt} of $p$-mean can often be extended to include all $p \in (0,1)$.
For instance, in the case when $X$ is an exponential random variable, equation \eqref{eqn:expmain} defines $\nu_p$ {\it uniquely} for all real $p>0$.

\medskip
\noindent
Similarly, in the case of a gamma random variable with parameters $\alpha, \lambda >0$, 
equation \eqref{eqn:genNu_p_Int} implies
\begin{equation}\label{eqn:a1Gamma}
\int\limits_0^{\nu_p} x^{p-1} \,(\nu_p-x)^{\alpha-1}e^{-\lambda(\nu_p-x)}\,dx=\int\limits_0^\infty x^{p-1} \,(\nu_p+x)^{\alpha-1}e^{-\lambda(\nu_p+x)}\,dx.
\end{equation}
Substituting $z=x/\nu_p$ into \eqref{eqn:a1Gamma}, we obtain
\begin{equation}\label{eqn:a1GammaZ}
Z_p:=\int\limits_0^1 z^{p-1} \,(1-z)^{\alpha-1}e^{\lambda \nu_p z}\,dz=\int\limits_0^\infty z^{p-1} \,(1+z)^{\alpha-1}e^{-\lambda \nu_p z}\,dz,
\end{equation}
where $Z_p=\nu_p^{1-\alpha-p} e^{\lambda \nu_p}H_p$.
Thus, for a gamma distribution, equation \eqref{eqn:a1GammaZ} also defines $\nu_p$ {\it uniquely} for all $p>0$.

\medskip
\noindent
For the log-normal random variables, equations \eqref{eqn:LogNormLHS} and \eqref{eqn:LogNormRHS} imply the {\it uniqueness} of $\nu_p$  for all $p>0$.
Moreover, Thm.~\ref{thm:LogNormal} finds the close form expression for the unique $\nu_p$ that solves \eqref{eqn:nupDefAlt} for all $p>0$.
Finally, for the Pareto random variables, \eqref{eqn:a1ParetoZ} also defines $\nu_p$ {\it uniquely} for all real $p>0$.

\medskip
\noindent
Even for a Bernoulli random variable with parameter $1/2$ and $\,p\in (0,1)$, $\nu_p$ defined as in Def.~\ref{def:meansAlt} is unique, while $\nu_p$ defined as in Def.~\ref{def:means} is not. 

\bigskip
\noindent
We will try to give an argument for extending Definition~\ref{def:meansAlt} to $p \in (0,1)$ in a more general way.
Suppose $f(x)$ is differentiable in $(L,R)$,  then, by \eqref{eqn:genNu_p_Int}, we have
\begin{align*}
0 =& {d \over dp}\left(\int\limits_0^{\nu_p -L} x^{p-1} \,f(\nu_p-x)\,dx ~-\int\limits_0^{R-\nu_p} x^{p-1} \,f(\nu_p+x)\,dx \right)\\
=& {d \nu_p \over dp}\left((\nu_p -L)^{p-1}f(L)+(R-\nu_p)^{p-1}f(R)+\int\limits_0^{\nu_p -L} x^{p-1} f' (\nu_p-x)dx -\int\limits_0^{R-\nu_p} x^{p-1} f' (\nu_p+x)dx \right)\\
&\quad +\int\limits_0^{\nu_p -L} x^{p-1}\, \log{x}\, f(\nu_p-x)\, dx ~-~ \int\limits_0^{R-\nu_p} x^{p-1}\,\log{x}\, f(\nu_p+x)\, dx,
\end{align*}
and therefore,
\begin{equation}\label{eqn:dnupGen}
{d \nu_p \over dp}={\int\limits_0^{R-\nu_p} x^{p-1}\, \log{x}\, f(\nu_p+x)\, dx -  \int\limits_0^{\nu_p -L} x^{p-1}\,\log{x}\, f(\nu_p-x)\, dx \over 
(\nu_p -L)^{p-1}f(L)+(R-\nu_p)^{p-1}f(R)+\int\limits_0^{\nu_p -L} x^{p-1} f' (\nu_p-x)dx -\int\limits_0^{R-\nu_p} x^{p-1} f' (\nu_p+x)dx}.
\end{equation}
Quantities $(\nu_p -L)^{p-1}f(L)$ and $(R-\nu_p)^{p-1}f(R)$ are interpreted as the corresponding left and right limits.
This includes the case when $L=-\infty$ and the case when $R=\infty$.

\medskip
\noindent
Now, by the Picard-Lindel\"{o}f  existence and uniqueness theorem \cite{Teschl2012}, if the right hand side in \eqref{eqn:dnupGen} is a Lipschitz function in $\nu_p$, 
then the solution $\nu_p$ of \eqref{eqn:dnupGen} extends uniquely to $\,p \in (0,1)$. 

\medskip
\noindent
So, if we can extend the Definition~\ref{def:meansAlt} of $p$-mean to all $p \in (0,1)$, then we can extend the definition of true positive skewness (Def.~\ref{def:TPS}).
Here, we say that a random variable (or its distribution) is {\it truly positively skewed over the full domain} if $\,\nu_p\,$ is increasing for $p$ in the domain
$\,(0,1) \cup \mathcal{D}$. In the unimodal case, we say that a distribution is {\it truly mode positively skewed over the full domain} if $\,\nu_p\,$ is increasing for $p$ in the domain $\,[0,1) \cup \mathcal{D}$.

\medskip
\noindent
In order to establish true positive skewness over the full domain, we need to show that $\,{d \nu_p \over dp}>0\,$  for all $\,p \in (0,1) \cup \mathcal{D}$. 
Often, we can show that the numerator and the denominator in  \eqref{eqn:dnupGen} are both positive.
Notice that the criteria in Lemma~\ref{lem:StochDomCrit} and Prop.~\ref{prop:StochDomCritEZ} work for $p \in (0,1)$ as well, establishing
stochastic dominance of density ${1 \over H_p}x^{p-1}\,f(\nu_p+x){\bf 1}_{(0,R-\nu_p)}(x)$ over density ${1 \over H_p}x^{p-1}\,f(\nu_p-x){\bf 1}_{(0,\nu_p -L)}(x)$.
Hence, Lem.~\ref{lem:StochDomCrit} and Prop.~\ref{prop:StochDomCritEZ} can be used to prove positivity of the numerator in the RHS of \eqref{eqn:dnupGen} via equation \eqref{eqn:main}.
Finally, we notice that the denominator in the RHS of \eqref{eqn:dnupGen} is positive if $\log{f(x)}$ is concave over $(L,R)$.
\begin{lem}\label{lem:LogConcaveTPSkew}
Consider a continuous random variable supported over $(L,R)$ with p.d.f. $f(x)$ differentiable on $supp(f)=(L,R)$.
Suppose Definition~\ref{def:meansAlt} of $p$-mean $\nu_p$ can be extended to $p \in (0,1)$, and suppose $\log{f(x)}$ is concave.
If a random variable with density function ${1 \over H_p}x^{p-1}\,f(\nu_p+x){\bf 1}_{(0,R-\nu_p)}(x)$ exhibits strict stochastic dominance over 
a random variable with density function ${1 \over H_p}x^{p-1}\,f(\nu_p-x){\bf 1}_{(0,\nu_p -L)}(x)$,
then function $\nu_p$ is increasing at $p \in (0,1)$.
\end{lem}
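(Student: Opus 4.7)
The strategy is to establish $d\nu_p/dp > 0$ at $p \in (0,1)$ from the explicit representation in \eqref{eqn:dnupGen}, by showing that both its numerator and denominator are strictly positive under the hypotheses. For the numerator, the argument is identical to the one in Theorem \ref{thm:TPSkew}: apply Lemma \ref{lem:StD} to the strictly increasing test function $h(y) = \log y$ together with the assumed strict stochastic dominance of $\frac{1}{H_p} y^{p-1} f(\nu_p + y) \mathbf{1}_{(0, R-\nu_p)}(y)$ over $\frac{1}{H_p} y^{p-1} f(\nu_p - y) \mathbf{1}_{(0, \nu_p - L)}(y)$ to obtain the strict inequality \eqref{eqn:main}.

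The substantive work is showing that the denominator of \eqref{eqn:dnupGen} is strictly positive; this is where log-concavity enters. My plan is to factor $f'(x) = f(x)\,g(x)$ with $g(x) := (\log f(x))'$, which by log-concavity is a decreasing function on $(L,R)$. Substituting this factorization into the two integral terms of the denominator, the monotonicity $g(\nu_p - y) \geq g(\nu_p) \geq g(\nu_p + y)$ for $y > 0$, combined with the defining identity \eqref{eqn:genNu_p_Int} (which says both $\int_0^{\nu_p - L} y^{p-1} f(\nu_p - y)\,dy$ and $\int_0^{R-\nu_p} y^{p-1} f(\nu_p + y)\,dy$ equal $H_p$), yields
$$\int_0^{\nu_p-L} y^{p-1} f(\nu_p - y)\, g(\nu_p - y)\,dy \;\geq\; g(\nu_p)\, H_p \;\geq\; \int_0^{R-\nu_p} y^{p-1} f(\nu_p + y)\, g(\nu_p + y)\,dy.$$
Subtracting shows that the difference of integral terms in the denominator is non-negative, and since the two boundary limits $(\nu_p - L)^{p-1} f(L)$ and $(R-\nu_p)^{p-1} f(R)$ are manifestly non-negative as well, the full denominator is $\geq 0$.

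The main obstacle will be upgrading this bound to strict positivity. I plan to argue by a dichotomy: if $g$ is not constant on $(L,R)$, then $g$ is strictly decreasing on a set of positive measure, which makes at least one of the two displayed integral inequalities strict; if on the other hand $g$ is constant, then $f$ is a pure exponential on $(L,R)$, which is incompatible with support equal to all of $\mathbb{R}$, so at least one endpoint is finite with $f$ taking a positive value there, forcing the corresponding boundary limit to be strictly positive (recall that $(\nu_p - L)^{p-1}$ or $(R-\nu_p)^{p-1}$ is a positive finite number when that endpoint is finite, even for $p \in (0,1)$). In either branch of the dichotomy the denominator is strictly positive, and combining with strict positivity of the numerator gives $d\nu_p/dp > 0$ as required.
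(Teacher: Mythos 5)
Your proposal is correct and reaches the conclusion along the paper's general route---positivity of the numerator of \eqref{eqn:dnupGen} via Lemma~\ref{lem:StD} with the strictly increasing test function $h(y)=\log y$, and positivity of the denominator via the factorization $f'=fg$ with $g=(\log f)'$ nonincreasing, combined with the identity \eqref{eqn:genNu_p_Int}---but your strictness dichotomy differs from the paper's in a way worth noting, because yours is actually the correct one. The paper splits on whether $\log f$ is constant: it asserts that the inequality \eqref{eqn:concaveNN} is strict whenever $\log f$ is nonconstant, and invokes the boundary term $(\nu_p-L)^{p-1}f(L)>0$ only in the constant (uniform) case. That assertion fails for pure exponentials: if $f(x)=Ae^{cx}$ with $c\neq 0$ (for instance the exponential density itself, with $L=0$, $R=\infty$), then $g\equiv c$ and both sides of \eqref{eqn:concaveNN} collapse to $c\,(H_p-H_p)=0$, an equality, even though $\log f$ is nonconstant. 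Your split on whether $g$ itself is constant handles this: when $g$ is nonincreasing and nonconstant, there are $x_1<x_2$ in $(L,R)$ with $g(x_1)>g(x_2)$, and comparing both with $g(\nu_p)$ shows that $g(\nu_p-y)>g(\nu_p)$ or $g(\nu_p+y)<g(\nu_p)$ holds on an interval of positive length on at least one side of $\nu_p$, making one of your two displayed integral bounds strict; when $g\equiv c$, integrability of $Ae^{cx}$ over $(L,R)$ forces at least one finite endpoint, at which $f$ has a strictly positive limit, so the corresponding boundary term in the denominator of \eqref{eqn:dnupGen} is strictly positive---exactly the repair the exponential case requires, and consistent with the direct computation \eqref{eqn:expmain1}, where the entire coefficient of $d\nu_p/dp$ comes from the boundary. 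So your argument closes an edge case that the paper's own proof glosses over. One small polish: the phrase ``strictly decreasing on a set of positive measure'' is looser than what you actually need; the clean statement is the two-point comparison just described, which your monotonicity setup already delivers.
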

\begin{proof}
Suppose $\log{f(x)}$ is concave, then ${f'(x) \over f(x)}={d \over dx}\log{f(x)}$ is nonincreasing, and 
\begin{align}\label{eqn:concaveNN}
\int\limits_0^{\nu_p -L} &x^{p-1} f' (\nu_p-x)dx -\int\limits_0^{R-\nu_p} x^{p-1} f' (\nu_p+x)dx \nonumber \\
&= \int\limits_0^{\nu_p -L} x^{p-1} {f' (\nu_p-x) \over f(\nu_p-x)} f(\nu_p-x) dx -\int\limits_0^{R-\nu_p} x^{p-1} {f' (\nu_p+x) \over f(\nu_p+x)} f(\nu_p+x)dx \nonumber \\
&\geq  ~~{f' (\nu_p) \over f(\nu_p)}\left(\int\limits_0^{\nu_p -L} x^{p-1} f(\nu_p-x) dx -\int\limits_0^{R-\nu_p} x^{p-1} f(\nu_p+x)dx\right) ~=0
\end{align}
by \eqref{eqn:genNu_p_Int}.
If $\log{f(x)}$ is not constant in $(L,R)$, then the inequality in \eqref{eqn:concaveNN} is strict.
If $\log{f(x)}$ is constant in $(L,R)$, then $L$ needs to be finite, and therefore
$$(\nu_p -L)^{p-1}f(L)>0.$$
In either case, 
\begin{equation}\label{eqn:concavePos} 
(\nu_p -L)^{p-1}f(L)+(R-\nu_p)^{p-1}f(R)+\!\!\!\!\int\limits_0^{\nu_p -L} \!\!x^{p-1} f' (\nu_p-x)dx -\!\!\!\!\int\limits_0^{R-\nu_p} \!\!\!x^{p-1} f' (\nu_p+x)dx >0.
\end{equation}
\end{proof}

\medskip
\noindent
Additionally, for $p=1$,  integration yields
$$f(L)+f(R)+\int\limits_0^{\nu_1 -L}  f' (\nu_1-x)dx -\int\limits_0^{R-\nu_1} f' (\nu_1+x)dy=2f(\nu_1)>0,$$
whence, from equations \eqref{eqn:main} and \eqref{eqn:dnupGen}, we have
\begin{equation}\label{eqn:dnupSimpleP1}
{d \nu_p \over dp}\Bigg|_{p=1}={1 \over 2f(\nu_1)}\left(\int\limits_0^{R-\nu_1} \log{x}\, f(\nu_1+x)\, dx -  \int\limits_0^{\nu_1 -L} \log{x}\, f(\nu_1-x)\, dx\right)~>0.
\end{equation}

\medskip
\noindent
Next, we go through some examples of true positive skewness over the full domain.
\begin{ex}[Exponential distribution]\label{ex:exp}
For an exponential random variable, equations \eqref{eqn:expmain1} and \eqref{eqn:expmain2} imply {\it true mode positive skewness over the full domain}.
\end{ex}

\begin{ex}[Gamma distribution]\label{ex:gamma}
Consider a gamma distribution. If $\alpha>1$, then, the calculations in Sect.~\ref{sec:beta} yield 
Lemma~\ref{lem:StochDomCrit} is satisfied for all $\,p \in (0,1) \cup \mathcal{D}$.
Therefore, density function ${1 \over H_p}x^{p-1}\,f(\nu_p+x){\bf 1}_{(0,R-\nu_p)}(x)$ exhibits strict stochastic dominance over 
density function ${1 \over H_p}x^{p-1}\,f(\nu_p-x){\bf 1}_{(0,\nu_p -L)}(x)$.
Moreover,
$${d \over dx}\log f(x) ={f'(x) \over f(x)}={\alpha -1 \over x}-\lambda$$
is a decreasing function over $(0,\infty)$ as $\alpha >1$. Hence, $\log f(x)$ is concave, and Lem.~\ref{lem:LogConcaveTPSkew} implies 
{\it true mode positive skewness over the full domain}.

\medskip
\noindent
If $0<\alpha <1$, equation \eqref{eqn:a1GammaZ} implies that
${1 \over Z_p}z^{p-1} \,(1+z)^{\alpha-1}e^{-\lambda \nu_p z}{\bf 1}_{(0,\infty)}(z)$ exhibits strict stochastic dominance over 
${1 \over Z_p}z^{p-1} \,(1-z)^{\alpha-1}e^{\lambda \nu_p z}{\bf 1}_{(0,\nu_p)}(z)$, and therefore,  Lemma \ref{lem:StD} implies
\begin{equation}\label{eqn:a1GammaLog}
\int\limits_0^\infty (\log{z})\, z^{p-1} \,(1+z)^{\alpha-1}e^{-\lambda \nu_p z}\,dz ~> ~ \int\limits_0^1 (\log{z})\,z^{p-1} \,(1-z)^{\alpha-1}e^{\lambda \nu_p z}\,dz.
\end{equation}
We differentiate both integrals in \eqref{eqn:a1GammaZ} with respect to $p$, obtaining
\begin{align*}
\int\limits_0^1 (\log{z})\,z^{p-1} &\,(1-z)^{\alpha-1}e^{\lambda \nu_p z}\,dz
+{d \nu_p \over dp}\int\limits_0^1 z^{p} \,(1-z)^{\alpha-1}e^{\lambda \nu_p z}\,dz \\
=&\int\limits_0^\infty (\log{z})\, z^{p-1} \,(1+z)^{\alpha-1}e^{-\lambda \nu_p z}\,dz
-{d \nu_p \over dp}\int\limits_0^\infty z^{p} \,(1+z)^{\alpha-1}e^{-\lambda \nu_p z}\,dz.
\end{align*}
Thus, for all $p>0$, we have
$${d \nu_p \over dp}={\int\limits_0^\infty (\log{z})\, z^{p-1} \,(1+z)^{\alpha-1}e^{-\lambda \nu_p z}\,dz -\int\limits_0^1 (\log{z})\,z^{p-1} \,(1-z)^{\alpha-1}e^{\lambda \nu_p z}\,dz
\over \int\limits_{-1}^\infty |z|^{p} \,(1+z)^{\alpha-1}e^{-\lambda \nu_p z}\,dz} ~>0$$
by \eqref{eqn:a1GammaLog}.
Hence, $f(x)$ is {\it truly positively skewed over the full domain}.
\end{ex}

\begin{ex}[Beta distribution]\label{ex:beta}
For a beta distribution with parameters $\beta>\alpha>1$, Sect.~\ref{sec:beta} yields Lemma~\ref{lem:StochDomCrit} is satisfied.
Hence, ${1 \over H_p}x^{p-1}\,f(\nu_p+x){\bf 1}_{(0,R-\nu_p)}(x)$ exhibits strict stochastic dominance over 
${1 \over H_p}x^{p-1}\,f(\nu_p-x){\bf 1}_{(0,\nu_p -L)}(x)$.
Next,
$${d \over dx}\log f(x) ={f'(x) \over f(x)}={\alpha -1 \over x}-{\beta -1 \over 1-x}$$
is a decreasing function over $(0,1)$ as both, $\,\alpha>1\,$ and $\,\beta >1$. Hence, $\log f(x)$ is concave,  and Lem.~\ref{lem:LogConcaveTPSkew} implies 
{\it true mode positive skewness over the full domain}.

\end{ex}

\begin{ex}[Log-normal distribution]\label{ex:LogNormal}
Consider a log-normal distribution. Observe that Thm.~\ref{thm:LogNormal} works for all real $p>0$.
Hence, a log-normal distribution is {\it truly mode positively skewed over the full domain}.
\end{ex}

\begin{ex}[Pareto distribution]\label{ex:Pareto}
For a Pareto distribution, calculations in Sect.~\ref{sec:Pareto} stay valid for all $p\in (0, \alpha+1)$. 
Thus, it is {\it truly mode positively skewed over the full domain}.
\end{ex}

\medskip
\noindent
Notice that even in the case of a unimodal continuous random variable with probability density function (p.d.f.) in $C^\infty$, the mode $\nu_0$ does not have to be equal $\lim\limits_{p \downarrow 0} \nu_p$.
Indeed, the case of log-normal random variables is an example of discontinuity of $\nu_p$ at $0$ as
\begin{equation}\label{eqn:LogNormDisc}
\nu_0=\exp\left\{\mu -\sigma^2\right\} ~<~\exp\left\{\mu -{\sigma^2 \over 2}\right\}=\lim\limits_{p \downarrow 0} \nu_p.
\end{equation}

\section{Discussion}\label{sec:dis}

While the unification of the three classical characterizations of skewness introduced here is important in its own right, the main insight of 
true positive/negative skewness was articulated in Remark \ref{rem:SD} following Theorem~\ref{thm:TPSkew}.
Specifically, for $p \in \mathcal{D}$, let us break the probability density function $f(x)$ into the left and right parts, $f(\nu_p-x){\bf 1}_{(0,\nu_p -L)}(x)$ and $f(\nu_p+x){\bf 1}_{(0,R-\nu_p)}(x)$. We multiply the two parts by $\,{1 \over H_p}x^{p-1}\,$ to make the two probability density functions, ${1 \over H_p}x^{p-1}\,f(\nu_p-x){\bf 1}_{(0,\nu_p -L)}(x)$ and ${1 \over H_p}x^{p-1}\,f(\nu_p+x){\bf 1}_{(0,R-\nu_p)}(x)$. It is natural to expect that if $f(x)$ is positively skewed, the right tail p.d.f. would exhibit stochastic dominance over the left tail p.d.f. for all $p \in \mathcal{D}$.
Theorem~\ref{thm:TPSkew} asserts that this stochastic dominance holds if the distribution is truly positively skewed.
The proposed principle of the dominating left tail p.d.f. over the right tail p.d.f. works regardless of the size of the domain $\mathcal{D}$, and
the main advantage of this new approach is that true positive (or negative) skewness can be established even in the case of infinite first moment when all three 
of Pearson's skewness measures (and even their numerators) are undefined.
This was demonstrated in Sect.~\ref{sec:Pareto} for Pareto distribution with $\alpha \in (0,1)$, while
the most compelling example was established in the work \cite{REU21} of students advised by the author of this current manuscript, where it was shown that 
L\'{e}vy distribution is truly positively skewed. This result will be published in a separate paper.

\bigskip
\noindent
Now, we will proceed by mentioning some precursors of the approach to skewness presented in this current paper.
There is a relation between the results in \cite{Zwet79} and this current paper. In \cite{Zwet79}, van Zwet proves that  
the mean-median-mode inequality
$$\mathrm{mode}~\leq~\mathrm{median}~\leq~\mathrm{mean}$$
holds whenever the cumulative distribution function $F(x)$ satisfies
\begin{equation}\label{eqn:vanZweit}
F(\nu_1-x)+F(\nu_1+x) \geq 1  \qquad \forall x>0.
\end{equation}
Observe that for a continuous unimodal random variable with p.d.f. $f(x)$, condition \eqref{eqn:vanZweit} is equivalent to
$${1 \over H_1} \int\limits_0^x f(\nu_1+y)\,dy ~\geq ~{1 \over H_1} \int\limits_0^x f(\nu_1-y)\,dy,$$
where $\,H_1={1 \over 2}$ for the median $(p=1)$.
In other words, the mean-median-mode inequality holds whenever p.d.f. ${1 \over H_1}f(\nu_1+x){\bf 1}_{(0,R-\nu_1)}(x)$ exhibits stochastic dominance over 
p.d.f. ${1 \over H_1}f(\nu_1-x){\bf 1}_{(0,\nu_1 -L)}(x)$.
That is, the stochastic dominance condition required for the true positive skewness criteria established in Sect.~\ref{sec:StocDom} holds for $p=1$.
This connection of \eqref{eqn:vanZweit} to stochastic dominance was also noticed by Dharmadhikari and Joag-dev \cite{DJd83,DJd88}.
Observe that in the unimodal case, the above result of van Zwet \cite{Zwet79} (with strict inequalities) together with Thm.~\ref{thm:TPSkew}
imply that if ${1 \over H_p}x^{p-1}\,f(\nu_p+x){\bf 1}_{(0,R-\nu_p)}(x)$ exhibits strict stochastic dominance over 
${1 \over H_p}x^{p-1}\,f(\nu_p-x){\bf 1}_{(0,\nu_p -L)}(x)$ for all $\,p \in \mathcal{D}$, then the distribution is
{\it truly mode positively skewed}.

\bigskip
\noindent
As we have seen, the notion of true skewness (i.e., monotonicity of $\nu_p$) is based on the comparison of the left and right halves of 
a continuous distribution dissected at the centroids $\nu_p$ for $p \in \mathcal{D}$. As such, true mode positive skewness is a more restrictive property 
than positivity of all three Pearson's skewness metrics. For instance, in \cite{Abadir2005} there is an example of a unimodal continuous random variable
with positive Pearson's moment coefficient of skewness $\gamma$ and negative median skewness. 
Naturally, in this example $\nu_2<\nu_1$  while Prop.~\ref{prop:first} yields $\nu_2<\nu_4$. Thus, the distribution is not truly positively or negatively skewed.

\bigskip
\noindent
Finally, Oja \cite{Oja1981} used convexity of order $k$ notion to extend the convex transformation approach to skewness and kurtosis developed in van Zwet \cite{Zwet64}.

\bigskip
\noindent
Next, we outline some potentially advantageous directions spinning out of this work.
First, in this paper, Definition \ref{def:TPS} only speaks of the sign of skewness (i.e., positive or negative).
Yet, we would like to consider measures of skewness based on the trajectory of $\nu_p$. 
One may consider the following approach generating a family of skewness measures.
For a probability density function $\varphi$ over $\,[0,\infty)$, let
$$\ell_\varphi={1 \over {d \nu_p \over dp}\big|_{p=1}}\,{\int\limits_{\mathcal{D}} \varphi(p-1)\, d \nu_p  \over \int\limits_{\mathcal{D}} \varphi(p-1)\, dp}.$$
Then, for truly positively/negatively skewed continuous distributions, positive quantities
$\ell_\varphi$, if finite, may be used to measure the magnitude of skewness.

In the case of log-normal distribution, Pearson's moment coefficient of skewness equals $\,\gamma=\left(\exp\left\{\sigma^2\right\}+2\right)\sqrt{\exp\left\{\sigma^2\right\}-1}$.
Thus, $\,\gamma$ is an increasing function of $\sigma^2$, reflecting evident increase in skewness as one increases the value of $\sigma^2$.
At the same time, contrary to the observed evolution of log-normal density (see Fig.~\ref{fig:LogNormSkew}), 
Pearson’s first and second skewness coefficients would decrease down to zero as $\sigma^2 \to \infty$. 
See \cite{AG95} for a relevant discussion.
Now, Thm.~\ref{thm:LogNormal} yields  
$$\ell_\varphi={1 \over {d \nu_p \over dp}\big|_{p=1}}\,{\int\limits_{\mathcal{D}} \varphi(p-1)\, {d \nu_p \over dp}\,dp  \over \int\limits_{\mathcal{D}} \varphi(p-1)\, dp}
=\int\limits_0^\infty \varphi(x) e^{x\sigma^2/2}dx.$$
Observe that in this case, if $\,\lim\limits_{x \to \infty}{-\ln\varphi(x) \over x}=\infty$, then $\ell_\varphi$ is a well-defined quantity that increases with increasing $\sigma^2$.

\bigskip
\noindent
Second, in a multidimensional case, for a random vector $X \in \mathbb{R}^d$, consider Fr\'{e}chet $p$-means $\nu_p \in \mathbb{R}^d$ 
as defined in Def.~\ref{eqn:p-mean}.
We believe that the trajectory of $\nu_p$ can be interpreted as a tailbone of the distribution, or the {\it trajectory of skewness}.
Moreover, if the asymptotic limit 
$$\zeta=\lim\limits_{p \to \infty} \left({d \nu_p \over dp}\Bigg/ \left\| {d \nu_p \over dp}\right\|\right),$$
exists, then $\zeta$ can be interpreted as the asymptotic direction of skewness in $\mathbb{R}^d$.
Estimating this tailbone trajectory and its limit $\zeta$ can be done numerically.

\begin{figure}[t] 
\centering\includegraphics[width=0.98\textwidth]{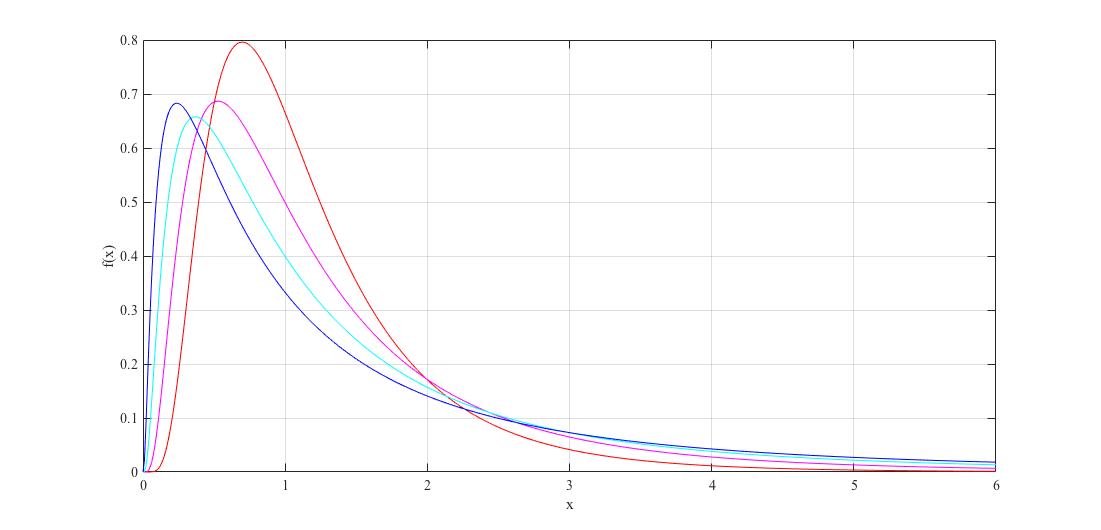}
\caption[Skewness of log-normal density function]{The log-normal density functions with different parameter values: $(\mu=0,\,\sigma=0.6)$ in red, $(\mu=0,\,\sigma=0.8)$ in magenta, $(\mu=0,\,\sigma=1)$ in light blue, $(\mu=0,\,\sigma=1.2)$ in dark blue. Evidently, as parameter $\mu$ does not affect the skewness, the distribution skews more to the right with greater values of $\sigma$.}
\label{fig:LogNormSkew}
\end{figure}

\section*{Acknowledgements}
The author would like to thank Anatoly Yambartsev, Ilya Zaliapin, Jordan Stoyanov, and the anonymous referee for their helpful comments and suggestions.
This research was supported by FAPESP award 2018/07826-5 and by NSF award DMS-1412557.


\end{document}